\newtheorem{thm}{Theorem}
\newtheorem{mydef}{Definition}
\newtheorem{exa}{Example}
\title{The representer theorem for Hilbert spaces: a necessary and sufficient condition}
\author{ Francesco Dinuzzo \and  Bernhard Schölkopf }
\date{Max Planck Institute for Intelligent Systems\\
Spemannstrasse 38,\\
72076 Tübingen, Germany}
\begin{document}

\maketitle

\begin{abstract}
A family of regularization functionals is said to admit a linear representer theorem if every member of the family admits minimizers that lie in a fixed finite dimensional subspace. A recent characterization states that a general class of regularization functionals with differentiable regularizer admits a linear representer theorem if and only if the regularization term is a non-decreasing function of the norm. In this report, we improve over such result by replacing the differentiability assumption with lower semi-continuity and deriving a proof that is independent of the dimensionality of the space.
\end{abstract}

\section{Introduction}

Tikhonov regularization \cite{Tikhonov77} is a popular and well-studied methodology to address ill-posed estimation problems \cite{Wahba90}, and learning from examples \cite{Cucker01}. In this report, we focus on regularization problems defined over a real Hilbert space $\mathcal{H}$. A Hilbert space is a vector space endowed with a inner product and a norm that is complete\footnote{Meaning that Cauchy sequences are convergent.}. Such setting is general enough to take into account a broad family of finite-dimensional regularization techniques such as regularized least squares or support vector machines for classification or regression, kernel principal component analysis, as well as a variety of regularization problems defined over infinite-dimensional reproducing kernel Hilbert spaces (RKHS).

In general, we study the problem of minimizing an extended real-valued functional $J:\mathcal{H} \rightarrow \mathbb{R} \cup \{+\infty\}$ of the form
\begin{equation}\label{E01}
J(w) = f(L_1 w, \ldots, L_{\ell} w ) +\Omega(w),
\end{equation}
\noindent where $L_1, \ldots, L_{\ell}$ are bounded (continuous) linear functionals on $\mathcal{H}$. The functional $J$ is the sum of an \emph{error term} $f$, which typically depends on empirical data, and a \emph{regularizer} $\Omega$ that enforces certain desirable properties on the solution. By allowing the functional $J$ to take the value $+\infty$, problems with hard constraints on the values $L_i w$ are included in the framework.

In machine learning, the most common class of regularization problems concerns a situation where a set of data pairs $(x_i,y_i)$ is available, $\mathcal{H}$ is a space of real-valued functions, and the objective functional to be minimized is of the form
\[
J(w) = c\left((x_1,y_1,w(x_1)),\cdots, (x_{\ell},y_{\ell},w(x_{\ell})\right) + \Omega(w).
\]

\noindent It is easy to see that this setting is a particular case of (\ref{E01}). Indeed, the dependence on the data pairs $(x_i,y_i)$ can be absorbed into the definition of $f$, and $L_i$ are point-wise evaluation functionals, i.e. such that $L_i w = w(x_i)$. Several popular techniques can be cast in such regularization framework.

\begin{exa}[Regularized least squares]
Also known as ridge regression when $\mathcal{H}$ is finite-dimensional. Corresponds to the choice
\[
c\left((x_1,y_1,w(x_1)),\cdots, (x_{\ell},y_{\ell},w(x_{\ell})\right) = \gamma \sum_{i=1}^{\ell}(y_i-w(x_i))^2,
\]
\noindent and  $\Omega(w) = \|w\|^2$, where the complexity parameter $\gamma \geq 0$ controls the trade-off between fitting of training data and regularity of the solution.
\end{exa}

\begin{exa}[Support vector machine]
Given binary labels $y_i = \pm 1$, the SVM classifier can be interpreted as a regularization method corresponding to the choice
\[
c\left((x_1,y_1,w(x_1)),\cdots, (x_{\ell},y_{\ell},w(x_{\ell})\right) = \gamma \sum_{i=1}^{\ell}\max\{0, 1-y_i w(x_i)\},
\]
and $\Omega(w) = \|w\|^2$. The hard-margin SVM can be recovered by letting $\gamma \rightarrow +\infty$.
\end{exa}

\begin{exa}[Kernel principal component analysis]
Kernel PCA can be shown to be equivalent to a regularization problem where
\[
c\left((x_1,y_1,w(x_1)),\cdots, (x_{\ell},y_{\ell},w(x_{\ell})\right) = \left\{
                                                                          \begin{array}{ll}
                                                                           0, &  \frac{1}{\ell}\sum_{i=1}^{\ell}\left(w(x_i)-\frac{1}{\ell}\sum_{j=1}^{\ell}w(x_j)\right)^2=1 \\
                                                                            +\infty, & \hbox{otherwise}
                                                                          \end{array}
                                                                        \right.,
\]
\noindent and $\Omega$ is any strictly monotonically increasing function of the norm $\|w\|$ \cite{Scholkopf98}. In this problem, there are no labels $y_i$, but the feature extractor function $w$ is constrained to produce vectors with unitary empirical variance.
\end{exa}

Within the formulation (\ref{E01}), the possibility of using general continuous linear functionals $L_i$ allows to consider a much broader class of regularization problems.

\begin{exa}[Tikhonov deconvolution]
Given a input signal $u$, assume that the convolution $u \ast w$ is well-defined for any $w \in \mathcal{H}$, and the point-wise evaluated convolution functionals
\[
L_i w = (u \ast w)(x_i) = \int_{\mathcal{X}}u(s)w(x_i-s)ds,
\]
are continuous. A possible way to recover $w$ from noisy measurements $y_i$ of the “output signal” is to solve regularization problems such as
\[
\min_{w\in\mathcal{H}}\left(\gamma \sum_{i=1}^{\ell}\left(y_i - (u \ast w)(x_i)\right)^2+\|w\|^2\right),
\]
where the objective functional is of the form (\ref{E01}).
\end{exa}

\begin{exa}[Learning from probability measures]
In many classical learning problems, it is appropriate to represent input training data as probability distributions instead of single points. Given a finite set of probability measures $\mathbb{P}_i$ on a measurable space $(\mathcal{X},\mathcal{A})$, where $\mathcal{A}$ is a $\sigma$-algebra of subsets of $\mathcal{X}$, introduce the expectations
\[
L_i w = E_{\mathbb{P}_i}(w)= \int_{\mathcal{X}} w(x) d\mathbb{P}_i(x).
\]
Then, given output labels $y_i$, one can learn a input-output relationship by solving regularization problems of the form
\[
\min_{w\in\mathcal{H}}\left(c\left((y_1,E_{\mathbb{P}_1}(w)),\cdots, (y_{\ell},E_{\mathbb{P}_{\ell}}(w)\right)+\|w\|^2\right).
\]
If the expectations are bounded linear functionals, such regularization functional is of the form (\ref{E01}).
\end{exa}

\begin{exa}[Ivanov regularization]
By allowing the regularizer $\Omega$ to take the value $+\infty$, we can also take into account the whole class of Ivanov-type regularization problems of the form
\[
\min_{w \in \mathcal{H}} f(L_1 w, \ldots, L_{\ell} w ), \quad \textrm{ subject to } \quad \phi(w) \leq 1,
\]
by reformulating them as the minimization of a functional of the type (\ref{E01}), where
\[
\Omega(w) = \left\{
              \begin{array}{ll}
                0, & \phi(w) \leq 1 \\
                +\infty, & otherwise
              \end{array}
            \right..
\]
\end{exa}

Let's now go back to the general formulation (\ref{E01}). By the Riesz representation theorem \cite{Riesz07, Frechet07}, $J$ can be rewritten as
\[
J(w) = f(\langle w, w_1 \rangle, \ldots, \langle w, w_{\ell} \rangle) + \Omega(w),
\]
\noindent where $w_i$ is the representer of the linear functional $L_i$ with respect to the inner product. Consider the following definition.

\begin{mydef} \label{DEF01}
A family $\mathcal{F}$ of regularization functionals of the form (\ref{E01}) is said to admit a \emph{linear representer theorem} if, for any $J \in \mathcal{F}$, and any choice of bounded linear functionals $L_i$, there exists a minimizer $w^*$ that can be written as a linear combination of the representers:
\[
w^* = \sum_{i=1}^{\ell}c_i w_i.
\]
\end{mydef}

If a linear representer theorem holds, the regularization problem boils down to a $\ell$-dimensional optimization problem on the scalar coefficients $c_i$. This property is important in practice, since it allows to employ numerical optimization techniques to compute a solution, independently of the dimension of $\mathcal{H}$. Sufficient conditions under which a family of functionals admits a representer theorem have been widely studied in the literature of statistics, inverse problems, and machine learning. The theorem also provides the foundations of learning techniques such as regularized kernel methods and support vector machines, see \cite{Vapnik98, Scholkopf01b, Shawe-Taylor04} and references therein.

Representer theorems are of particular interest when $\mathcal{H}$ is a reproducing kernel Hilbert space (RKHS) \cite{Aronszajn50}. Given a non-empty set $\mathcal{X}$, a RKHS is a space of functions $w:\mathcal{X}\rightarrow \mathbb{R}$ such that point-wise evaluation functionals are bounded, namely, for any $x \in \mathcal{X}$, there exists a non-negative real number $C_x$ such that
\[
|w(x)| \leq C_x \|w\|, \quad \forall w \in \mathcal{H}.
\]
\noindent It can be shown that a RKHS can be uniquely associated to a positive-semidefinite kernel function $K:\mathcal{X}\times\mathcal{X}\rightarrow \mathbb{R}$ (called \emph{reproducing kernel}), such that so-called \emph{reproducing property} holds:
\[
w(x) = \langle w, K_x \rangle, \qquad \forall \left(x,w\right) \in \mathcal{X} \times \mathcal{H},
\]
\noindent where the \emph{kernel sections} $K_x$ are defined as
\[
K_x(y) = K(x,y), \qquad \forall y \in \mathcal{X}.
\]
\noindent The reproducing property states that the representers of point-wise evaluation functionals coincide with the kernel sections. Starting from the reproducing property, it is also easy to show that the representer of any bounded linear functional $L$ is given by a function $K_L \in \mathcal{H}$ such that
\[
K_L(x) = L K_x, \qquad  \forall x \in \mathcal{X}.
\]
\noindent Therefore, in a RKHS, the representer of any bounded linear functional can be obtained explicitly in terms of the reproducing kernel.

If the regularization functional (\ref{E01}) admits minimizers, and the regularizer $\Omega$ is a nondecreasing function of the norm, i.e.
\begin{equation}\label{E02}
\Omega(w) = h(\|w\|), \quad \textup{ with } h:\mathbb{R}\rightarrow \mathbb{R}\cup\{+\infty\}, \textup{ nondecreasing,}
\end{equation}
\noindent the linear representer theorem follows easily from the Pythagorean identity. A proof that the condition (\ref{E02}) is sufficient appeared in \cite{Scholkopf01} in the case where $\mathcal{H}$ is a RKHS and $L_i$ are point-wise evaluation functionals. Earlier instances of representer theorems can be found in \cite{Kimeldorf71, Cox90, Poggio90}. More recently, the question of whether condition (\ref{E02}) is also necessary for the existence of linear representer theorems has been investigated \cite{Argyriou09}. In particular, \cite{Argyriou09} shows that, if $\Omega$ is differentiable (and certain technical existence conditions hold), then (\ref{E02}) is necessary and sufficient. The proof of \cite{Argyriou09} heavily exploits differentiability of $\Omega$, but the authors conjecture that the hypothesis can be relaxed. In this report, we show that (\ref{E02}) is necessary and sufficient for the family of regularization functionals of the form (\ref{E01}) to admit a linear representer theorem, by merely assuming that $\Omega$ is lower semicontinuous and satisfies basic conditions for the existence of minimizers. The proof is based on a characterization of radial nondecreasing functionals on a Hilbert space.

\section{A characterization of radial nondecreasing functionals}

In this section, we present a characterization of radial nondecreasing functionals defined over Hilbert spaces. We will make use of the following definition.

\begin{mydef}
A subset $\mathcal{S}$ of a Hilbert space $\mathcal{H}$ is called \emph{star-shaped} with respect to a point $z \in \mathcal{H}$ if
\[
(1-\lambda) z + \lambda x \in \mathcal{S}, \quad \forall x \in \mathcal{S}, \quad \forall \lambda \in [0, 1].
\]
\end{mydef}

It is easy to verify that a convex set is star-shaped with respect to any point of the set, whereas a star-shaped set does not have to be convex.

The following Theorem provides a geometric characterization of radial nondecreasing functions defined on a Hilbert space that generalizes the analogous result of \cite{Argyriou09} for differentiable functions.

\begin{thm}\label{THM1}
Let $\mathcal{H}$ denote a Hilbert space such that $\dim{\mathcal{H}} \geq 2$, and let $\Omega:\mathcal{H} \rightarrow \mathbb{R}\cup \{+\infty\}$ a lower semicontinuous function. Then, (\ref{E02}) holds if and only if
\begin{equation}\label{E04}
\Omega(x+y) \geq \max\{\Omega(x),\Omega(y)\}, \qquad \forall x, y \in \mathcal{H}: \langle x, y\rangle = 0.
\end{equation}
\end{thm}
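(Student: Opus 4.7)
The reverse implication is a one-line Pythagoras argument: for $x \perp y$, $\|x+y\| = \sqrt{\|x\|^2+\|y\|^2} \geq \max\{\|x\|,\|y\|\}$, so $h$ nondecreasing gives $\Omega(x+y) = h(\|x+y\|) \geq \max\{h(\|x\|),h(\|y\|)\} = \max\{\Omega(x),\Omega(y)\}$. For the forward direction, my plan is to study the sublevel sets $S_\alpha := \{w : \Omega(w)\leq \alpha\}$, which are closed by lower semicontinuity, and to show that every nonempty $S_\alpha$ is a closed ball centered at the origin. Hypothesis~(\ref{E04}) delivers one key structural fact: whenever $w \in S_\alpha$ and $w = x+y$ is an orthogonal decomposition, both $x$ and $y$ lie in $S_\alpha$. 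Since $w = P_V w + (w - P_V w)$ for every closed subspace $V$, this is equivalent to $S_\alpha$ being closed under all orthogonal projections; in particular, taking $V = \{0\}$ yields $0 \in S_\alpha$.

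First I would show that each $S_\alpha$ is star-shaped with respect to $0$. Given $w \in S_\alpha \setminus \{0\}$ and $\hat{w} := w/\|w\|$, I invoke $\dim \mathcal{H} \geq 2$ to pick a unit $e \perp \hat{w}$ and set $\hat{u}_\theta := \cos\theta\,\hat{w} + \sin\theta\,e$. Projecting $w$ onto $\mathbb{R}\hat{u}_\theta$ yields $\|w\|\cos\theta\,\hat{u}_\theta \in S_\alpha$, and a second projection onto $\mathbb{R}\hat{w}$ yields $\|w\|\cos^2\theta\,\hat{w} \in S_\alpha$. Letting $\theta$ sweep $[0,\pi/2]$, the factor $\cos^2\theta$ covers $[0,1]$, so the whole segment $[0,w]$ lies in $S_\alpha$.

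The main obstacle, and the substantive upgrade over \cite{Argyriou09}, is rotational invariance: proving that $w \in S_\alpha$ and $\|v\| < \|w\|$ imply $v \in S_\alpha$. My plan is to work in the 2D subspace containing $w$ and $v$, choosing an orthonormal basis $(\hat{w},e)$ so that $w = r\hat{w}$ and $v = s\,\hat{u}_\psi$ with $\hat{u}_\psi = \cos\psi\,\hat{w} + \sin\psi\,e$ and $s < r$. For each integer $k$ large enough that $|\psi|/k < \pi/2$, I iterate projection along the equally-spaced directions $\hat{u}_{j\psi/k}$, $j = 1, \ldots, k$: writing $w_0 = w$ and $w_j := \langle w_{j-1}, \hat{u}_{j\psi/k}\rangle \hat{u}_{j\psi/k}$, a one-line induction gives $w_j = r\cos^j(\psi/k)\,\hat{u}_{j\psi/k}$, and the structural fact places each $w_j$ in $S_\alpha$. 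Since $k\log\cos(\psi/k) = -\psi^2/(2k) + O(k^{-3}) \to 0$, the norm $\|w_k\| = r\cos^k(\psi/k)$ tends to $r$, so for $k$ large enough $\|w_k\| > s$; star-shapedness then puts $v = s\,\hat{u}_\psi$ on the segment $[0, w_k] \subset S_\alpha$. Setting $R_\alpha := \sup\{\|w\| : w \in S_\alpha\}$, this shows $B(0, R_\alpha) \subset S_\alpha$, and closedness of $S_\alpha$ upgrades this to $S_\alpha = \bar{B}(0, R_\alpha)$. Consequently $\Omega(w) \leq \alpha$ iff $\|w\| \leq R_\alpha$, and defining $h(t) := \inf\{\alpha : t \leq R_\alpha\}$ yields $\Omega = h(\|\cdot\|)$ with $h$ nondecreasing, as required.
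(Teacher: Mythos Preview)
Your proof is correct and follows essentially the same strategy as the paper's: study the closed sublevel sets, establish star-shapedness at the origin via a double application of the orthogonality hypothesis, and then obtain rotational invariance by working in a two-dimensional subspace and iterating small-angle rotations, using $\cos^k(\psi/k)\to 1$ (equivalently, the paper's $(1+\tan^2(\theta/n))^n\to 1$). The only cosmetic difference is the direction of the iteration: you project from $w$ toward $v$ so that each iterate lands in $S_\alpha$ directly, whereas the paper starts from $y$, adds orthogonal increments to reach a point $\lambda x$ on the ray through $x$, and then unwinds the chain $\Omega(\lambda x)\ge\Omega(x_{n-1})\ge\cdots\ge\Omega(y)$; these are inverse moves built on the same idea.
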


\begin{proof}

Assume that (\ref{E02}) holds. Then, for any pair of orthogonal vectors $x, y \in \mathcal{H}$, we have
\begin{align*}
\Omega(x+y)  & = h\left(\|x+y\|\right) = h\left(\sqrt{\|x\|^2+\|y\|^2}\right) \geq \max\{h\left(\|x\|\right),h\left(\|y\|\right)\}\\
             & = \max\{\Omega(x),\Omega(y)\}.
\end{align*}

\noindent Conversely, assume that condition (\ref{E04}) holds. Since $\dim{\mathcal{H}} \geq 2$, by fixing a generic vector $x \in \mathcal{X} \setminus \{0\}$ and a number $\lambda \in [0, 1]$, there exists a vector $y$ such that $\|y\| = 1$ and
\[
\lambda =  1-\cos^2 \theta,
\]
\noindent where
\[
\cos \theta = \frac{\langle x, y \rangle}{\|x\|\|y\|}.
\]

\noindent In view of (\ref{E04}), we have
\begin{align*}
\Omega(x)  & = \Omega(x- \langle x, y \rangle y + \langle x, y \rangle y)\\
      & \geq \Omega(x- \langle x, y \rangle y) = \Omega\left(x- \cos^2 \theta x+ \cos^2 \theta x - \langle x, y \rangle y \right)\\
      & \geq \Omega\left(\lambda x\right).
\end{align*}
\noindent Since the last inequality trivially holds also when $x = 0$, we conclude that
\begin{equation}\label{E05}
\Omega(x) \geq \Omega(\lambda x), \qquad \forall x \in \mathcal{H},\quad \forall \lambda \in [0, 1],
\end{equation}
\noindent so that $\Omega$ is non-decreasing along all the rays passing through the origin. In particular, the minimum of $\Omega$ is attained at $x = 0$.

Now, for any $c \geq \Omega(0)$, consider the sublevel sets
\[
\mathcal{S}_c = \left\{ x \in \mathcal{H}: \Omega(x) \leq c \right\}.
\]
\noindent From (\ref{E05}), it follows that $\mathcal{S}_c$ is not empty and star-shaped with respect to the origin. In addition, since $\Omega$ is lower semi-continuous, $\mathcal{S}_c$ is also closed. We now show that $\mathcal{S}_c$ is either a closed ball centered at the origin, or the whole space. \noindent To this end, we show that, for any $x \in \mathcal{S}_c$, the whole ball
\[
\mathcal{B} =\{y\in\mathcal{H}: \|y\| \leq \|x\|\},
\]
is contained in $\mathcal{S}_c$. First, take any $y \in \textrm{int}(\mathcal{B}) \setminus \textrm{span}\{x\}$, where \textrm{int} denotes the interior. Then, $y$ has norm strictly less than $\|x\|$, that is
\[
0 < \|y\| < \|x\|,
\]
\noindent and is not aligned with $x$, i.e.
\[
y \neq \lambda x, \quad \forall \lambda \in \mathbb{R}.
\]
\noindent Let $\theta \in \mathbb{R}$ denote the angle between $x$ and $y$. Now, construct a sequence of points $x_k$ as follows:
\[
\left\{
  \begin{array}{ll}
    x_0 = y, &  \\
    x_{k+1} = x_k + a_k u_k,
  \end{array}
\right.
\]
\noindent where
\[
a_k = \|x_k\| \tan\left(\frac{\theta}{n}\right), \qquad n \in \mathbb{N}
\]
\noindent and $u_k$ is the unique unitary vector that is orthogonal to $x_k$, belongs to the two-dimensional subspace $\textrm{span}\{x,y\}$, and is such that $\langle u_k, x \rangle > 0$, that is
\[
u_k \in \textrm{span}\{x,y\}, \qquad \|u_k\| = 1, \qquad \langle u_k, x_k \rangle = 0, \qquad \langle u_k, x \rangle > 0.
\]

\noindent By orthogonality, we have
\begin{equation}\label{E09}
\|x_{k+1}\|^2 = \|x_k\|^2 + a_k^2 = \|x_k\|^2\left(1+ \tan^2\left(\frac{\theta}{n}\right)\right) = \|y\|^2\left(1+ \tan^2\left(\frac{\theta}{n}\right)\right)^{k+1}.
\end{equation}
\noindent In addition, the angle between $x_{k+1}$ and $x_k$ is given by
\[
\theta_k = \arctan\left(\frac{a_k}{\|x_k\|}\right) = \frac{\theta}{n},
\]
\noindent so that the total angle between $y$ and $x_n$ is given by
\[
\sum_{k=0}^{n-1} \theta_k = \theta.
\]
\noindent Since all the points $x_k$ belong to the subspace spanned by $x$ and $y$, and the angle between $x$ and $x_n$ is zero, we have that $x_n$ is positively aligned with $x$, that is
\[
x_n = \lambda x, \qquad \lambda \geq 0.
\]
Now, we show that $n$ can be chosen in such a way that $\lambda \leq 1$. Indeed, from (\ref{E09}) we have
\[
\lambda^2 = \left(\frac{\|x_n\|}{\|x\|}\right)^2 = \left(\frac{\|y\|}{\|x\|}\right)^2 \left(1+ \tan^2\left(\frac{\theta}{n}\right)\right)^n,
\]
\noindent and it can be verified that
\[
\lim_{n \rightarrow +\infty} \left(1+ \tan^2\left(\frac{\theta}{n}\right)\right)^n = 1,
\]
\noindent therefore $\lambda \leq 1$ for a sufficiently large $n$. Now, write the difference vector in the form
\[
\lambda x-y = \sum_{k=0}^{n-1} (x_{k+1}-x_k),
\]
\noindent and observe that
\[
\langle x_{k+1}-x_k, x_k \rangle = 0.
\]
By using (\ref{E05}) and proceeding by induction, we have
\[
c \geq \Omega(\lambda x) = \Omega\left(x_n-x_{n-1}+x_{n-1}\right) \geq \Omega(x_{n-1}) \geq \cdots \geq \Omega(x_0) = \Omega(y),
\]
\noindent so that $y \in \mathcal{S}_c$.  Since $\mathcal{S}_c$ is closed and the closure of $\textrm{int}(\mathcal{B}) \setminus \textrm{span}\{x\}$ is the whole ball $\mathcal{B}$, every point $y \in \mathcal{B}$ is also included in $\mathcal{S}_c$. This proves that $\mathcal{S}_c$ is either a closed ball centered at the origin, or the whole space $\mathcal{H}$.

Finally, for any pair of points such that $\|x\| = \|y\|$, we have $x \in \mathcal{S}_{\Omega(y)}$, and $y \in \mathcal{S}_{\Omega(x)}$, so that
\[
\Omega(x) = \Omega(y).
\]

\end{proof}

\section{Representer theorem: a necessary and sufficient condition}

In this section, we  prove that condition (\ref{E02}) is necessary and sufficient for suitable families of regularization functionals of the type (\ref{E01}) to admit a linear representer theorem.

\begin{thm}\label{THM3}
Let $\mathcal{H}$ denote a Hilbert space such that $\dim{\mathcal{H}} \geq 2$. Let $\mathcal{F}$ denote a family of functionals $J:\mathcal{H} \rightarrow \mathbb{R} \cup \{+\infty\}$ of the form (\ref{E01}) that admit minimizers.

\begin{enumerate}
  \item If $\Omega$ satisfy (\ref{E02}), then $\mathcal{F}$ admits a linear representer theorem.
  \item Conversely, assume that $\mathcal{F}$ contains a set of functionals of the form
\begin{equation}\label{EQ11}
J_{p}^{\gamma}(w) = \gamma f\left(\left\langle w, p\right\rangle\right) + \Omega\left(w\right), \qquad \forall p \in \mathcal{H}, \quad \forall \gamma \in \mathbb{R}_+,
\end{equation}
\noindent where $f(z)$ is uniquely minimized at $z =1$. For any lower-semicontinuous $\Omega$, the family $\mathcal{F}$ admits a linear representer theorem only if (\ref{E02}) holds.
\end{enumerate}
\end{thm}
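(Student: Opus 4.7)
The proof splits cleanly along the two directions, and I would treat them separately.

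For the sufficiency part, I would use the classical orthogonal-projection argument: given a minimizer $w^{*}$ of $J\in\mathcal{F}$, split $w^{*}=w^{*}_\parallel+w^{*}_\perp$ orthogonally along $V=\mathrm{span}\{w_1,\dots,w_\ell\}$ and $V^\perp$. The inner products $\langle w^{*},w_i\rangle$ and $\langle w^{*}_\parallel,w_i\rangle$ coincide, so the error term is unchanged, while Pythagoras and the monotonicity of $h$ give $\Omega(w^{*}_\parallel)\leq\Omega(w^{*})$. Thus $w^{*}_\parallel$ is a minimizer of the required form.

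For the necessity part, my strategy is to verify the orthogonality condition~(\ref{E04}) and then invoke Theorem~\ref{THM1}. A preliminary observation is that applying the representer theorem to $J_0^\gamma$ forces $0$ itself to be a minimizer (since every $cp$ is $0$ when $p=0$), which gives $\Omega(0)\leq\Omega(w)$ for every $w$; this handles (\ref{E04}) in the degenerate cases $x=0$, $y=0$, and $\Omega(x+y)=+\infty$.

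For the remaining case of orthogonal $x,y\neq 0$ with $\Omega(x+y)<+\infty$, I would set $p=x/\|x\|^{2}$, so that $\langle x+y,p\rangle=\langle x,p\rangle=1$ (using $\langle y,p\rangle=0$). The representer theorem furnishes, for each $\gamma>0$, a minimizer of $J_p^\gamma$ of the form $\lambda_\gamma x$. Testing this minimizer against $w=x+y$ yields
\[
\gamma\bigl(f(\lambda_\gamma)-f(1)\bigr)+\Omega(\lambda_\gamma x)\;\leq\;\Omega(x+y).
\]
Since $f(\lambda_\gamma)\geq f(1)$, I immediately get $\Omega(\lambda_\gamma x)\leq\Omega(x+y)$ for every $\gamma$; and since $\Omega(\lambda_\gamma x)\geq\Omega(0)$, the quantity $\gamma\bigl(f(\lambda_\gamma)-f(1)\bigr)$ is bounded above by $\Omega(x+y)-\Omega(0)<+\infty$, so $f(\lambda_\gamma)\to f(1)$ as $\gamma\to+\infty$. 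Using that $f$ is uniquely minimized at $1$, this should push $\lambda_\gamma\to 1$, and lower semicontinuity of $\Omega$ then delivers $\Omega(x)\leq\liminf_\gamma\Omega(\lambda_\gamma x)\leq\Omega(x+y)$. Running the same argument with the roles of $x$ and $y$ swapped (take $p=y/\|y\|^{2}$) yields $\Omega(y)\leq\Omega(x+y)$; together these establish (\ref{E04}), and Theorem~\ref{THM1} closes the proof.

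The step I expect to be the most delicate is the passage from $f(\lambda_\gamma)\to f(1)$ to $\lambda_\gamma\to 1$: strict uniqueness of the minimizer of $f$ alone is not quite sufficient in general, and one has to exploit it together with enough regularity of $f$ (continuity and coerciveness would suffice) to exclude stray $\lambda_\gamma$ on which $f$ sits close to $f(1)$. Everything else — the orthogonal decomposition, the reduction to (\ref{E04}), and the invocation of Theorem~\ref{THM1} — is routine once this convergence is secured.
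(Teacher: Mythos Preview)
Your proposal is correct and follows essentially the same route as the paper: the same orthogonal-projection argument for sufficiency, and for necessity the same choice $p=x/\|x\|^2$, the same comparison of $J_p^\gamma(\lambda_\gamma x)$ against $J_p^\gamma(x+y)$, the same boundedness argument forcing $f(\lambda_\gamma)\to f(1)$, and the same appeal to lower semicontinuity and Theorem~\ref{THM1}. Your caveat about the passage from $f(\lambda_\gamma)\to f(1)$ to $\lambda_\gamma\to 1$ is well taken --- the paper asserts this step from unique minimization alone without further justification, so you are being at least as careful as the original.
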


\begin{proof} The first part of the theorem (sufficiency) follows from an orthogonality argument. Take any functional $J \in \mathcal{F}$. Let $\mathcal{R} = \textrm{span}\{w_1, \ldots, w_{\ell}\}$ and let $\mathcal{R}^{\perp}$ denote its orthogonal complement. Any minimizer $w^*$ of $J$ can be uniquely decomposed as
\[
w^* = u + v, \qquad u \in \mathcal{R}, \quad v \in \mathcal{R}^{\perp}.
\]
\noindent If (\ref{E02}) holds, then we have
\[
J(w^*) -J(u) = h(\|w^*\|)-h(\|u\|) \geq 0,
\]
\noindent so that $u \in \mathcal{R}$ is also a minimizer.

Now, let's prove the second part of the theorem. First of all, observe that the functional
\[
J_0^{\gamma}(w) = \gamma f(0)+\Omega(w),
\]
\noindent obtained by setting $p =0$ in (\ref{EQ11}), belongs to $\mathcal{F}$. By hypothesis, $J_0^{\gamma}$ admits minimizers. In addition, by the representer theorem, the only admissible minimizer of $J_0$ is the origin, that is
\begin{equation}\label{E07}
\Omega(y) \geq \Omega(0), \qquad \forall y \in \mathcal{H}.
\end{equation}

Now take any $x \in \mathcal{H} \setminus \{0\}$ and let
\[
p = \frac{x}{\|x\|^2}.
\]

By the representer theorem, the functional $J_p^{\gamma}$ of the form (\ref{EQ11}) admits a minimizer of the type
\[
w = \lambda(\gamma) x.
\]
\noindent Now, take any $y \in \mathcal{H}$ such that $\langle x, y \rangle = 0$. By using the fact that $f(z)$ is minimized at $z =1$, and the linear representer theorem, we have
\[
\gamma f(1) + \Omega\left(\lambda(\gamma) x\right) \leq  \gamma f(\lambda(\gamma)) + \Omega\left(\lambda(\gamma) x\right) = J_p^{\gamma}(\lambda(\gamma)x) \leq J_p^{\gamma}(x + y) = \gamma f(1) + \Omega\left(x+y\right).
\]
\noindent By combining this last inequality with (\ref{E07}), we conclude that
\begin{equation}\label{E06}
\Omega\left(x+y\right) \geq \Omega\left(\lambda(\gamma)x\right), \qquad \forall x, y \in \mathcal{H}: \langle x, y\rangle = 0, \qquad \forall \gamma \in \mathbb{R}_+.
\end{equation}

\noindent Now, there are two cases:
\begin{itemize}
  \item $\Omega\left(x+y\right) = +\infty$
  \item $\Omega\left(x+y\right) = C < +\infty$.
\end{itemize}
In the first case, we trivially have
\[
\Omega\left(x+y\right) \geq \Omega(x).
\]
\noindent In the second case, using (\ref{E07}) and (\ref{E06}), we obtain
\begin{equation}\label{EQ10}
0 \leq \gamma\left(f(\lambda(\gamma))-f(1)\right) \leq \Omega\left(x+y\right) - \Omega\left(\lambda(\gamma)x\right) \leq C-\Omega(0) < +\infty, \qquad \forall \gamma \in \mathbb{R}_+.
\end{equation}

\noindent Let $\gamma_k$ denote a sequence such that $\lim_{k\rightarrow +\infty} \gamma_k = +\infty$, and consider the sequence
\[
a_k = \gamma_k\left(f(\lambda(\gamma_k))-f(1)\right).
\]

\noindent From (\ref{EQ10}), it follows that $a_k$ is bounded. Since $z =1$ is the only minimizer of $f(z)$, the sequence $a_k$ can remain bounded only if
\[
\lim_{k \rightarrow +\infty} \lambda(\gamma_{k}) = 1.
\]

\noindent By taking the limit inferior in (\ref{E06}) for $\gamma \rightarrow +\infty$, and using the fact that $\Omega$ is lower semicontinuous, we obtain condition (\ref{E04}). It follows that $\Omega$ satisfies the hypotheses of Theorem \ref{THM1}, therefore (\ref{E02}) holds.
\end{proof}

The second part of Theorem \ref{THM3} states that any lower-semicontinuous regularizer $\Omega$ has to be of the form (\ref{E02}) in order for the family $\mathcal{F}$ to admit a linear representer theorem. Observe that $\Omega$ is not required to be differentiable or even continuous. Moreover, it needs not to have bounded lower level sets. For the necessary condition to holds, the family $\mathcal{F}$ has to be broad enough to contain at least a set of regularization functionals of the form (\ref{EQ11}). The following examples show how to apply the necessary condition of Theorem \ref{THM3} to classes of regularization problems with standard loss functions.

\begin{itemize}
  \item Let $L:\mathbb{R}^2 \rightarrow \mathbb{R} \cup \{+\infty\}$ denote any loss function of the type
  \[
  L(y,z) = \widetilde{L}(y-z),
  \]
  \noindent such that $\widetilde{L}(t)$ is uniquely minimized at $t = 0$. Then, for any lower-semicontinuous regularizer $\Omega$, the family of regularization functionals of the form
  \[
  J(w) = \gamma \sum_{i=1}^{\ell}L\left(y_i,\langle w, w_i\rangle\right) +\Omega(w),
  \]
  admits a linear representer theorem if and only if (\ref{E02}) holds. To see that the hypotheses of Theorem \ref{THM3} are satisfied, it is sufficient to consider the subset of functionals with $\ell =1$, $y_1 = 1$, and $w_1 = p \in \mathcal{H}$. These functionals can be written in the form (\ref{EQ11}) with
  \[
  f(z) = L(1,z).
  \]

  \item The class of regularization problems with the hinge (SVM) loss of the form
  \[
  J(w) = \gamma \sum_{i=1}^{\ell}\max\{0, 1-y_i \langle w, w_i\rangle\} +\Omega(w),
  \]
  with $\Omega$ lower-semicontinuous, admits a linear representer theorem if and only if $\Omega$ satisfy (\ref{E02}). For instance, by choosing $\ell =2$, and
  \[
  \left(y_1, w_1 \right) = (1, p), \qquad (y_2, w_2) = (-1, p/2),
  \]
  we obtain regularization functionals of the form (\ref{EQ11}) with
  \[
  f(z) = \max\{0,1-z\} + \max\{0, 1+z/2\},
  \]
  \noindent and it is easy to verify that $f$ is uniquely minimized at $z =1$.
\end{itemize}

\section{Conclusions}

We have shown that some general families of regularization functionals defined over a Hilbert space with lower semicontinuous regularizer admits a linear representer theorem if and only if the regularizer is a radial nondecreasing function. The result extends a previous characterization of \cite{Argyriou09}, by relaxing the assumptions on the regularization term. We provide a unified proof that holds simultaneously for the finite and the infinite dimensional case.


\begin{thebibliography}{10}

\bibitem{Argyriou09}
A.~Argyriou, C.~A. Micchelli, and M.~Pontil.
\newblock When is there a representer theorem? {V}ector versus matrix
  regularizers.
\newblock {\em Journal of Machine Learning Research}, 10:2507--2529, 2009.

\bibitem{Aronszajn50}
N.~Aronszajn.
\newblock Theory of reproducing kernels.
\newblock {\em Transactions of the American Mathematical Society}, 68:337--404,
  1950.

\bibitem{Cox90}
D.~Cox and F.~O’{ }Sullivan.
\newblock Asymptotic analysis of penalized likelihood and related estimators.
\newblock {\em The Annals of Statistics}, 18:1676–1695, 1990.

\bibitem{Cucker01}
F.~Cucker and S.~Smale.
\newblock On the mathematical foundations of learning.
\newblock {\em Bulletin of the American mathematical society}, 39:1--49, 2001.

\bibitem{Frechet07}
M.~Fréchet.
\newblock Sur les ensembles de fonctions et les opérations linéaires.
\newblock {\em Comptes rendus de l'Académie des sciences Paris}, 144:1414–1416,
  1907.

\bibitem{Kimeldorf71}
G.~Kimeldorf and G.~Wahba.
\newblock Some results on {T}chebycheffian spline functions.
\newblock {\em Journal of Mathematical Analysis and Applications},
  33(1):82--95, 1971.

\bibitem{Poggio90}
T.~Poggio and F.~Girosi.
\newblock {N}etworks for approximation and learning.
\newblock In {\em Proceedings of the {IEEE}}, volume~78, pages 1481--1497,
  1990.

\bibitem{Riesz07}
F.~Riesz.
\newblock Sur une espèce de géométrie analytique des systèmes de fonctions
  sommables.
\newblock {\em Comptes rendus de l'Académie des sciences Paris}, 144:1409–1411,
  1907.

\bibitem{Scholkopf01}
B.~Sch\"{o}lkopf, R.~Herbrich, and A.~J. Smola.
\newblock A generalized representer theorem.
\newblock In {\em In Proceedings of the Annual Conference on Computational
  Learning Theory}, pages 416--426, 2001.

\bibitem{Scholkopf01b}
B.~Sch\"{o}lkopf and A.~J. Smola.
\newblock {\em Learning with Kernels: Support Vector Machines, Regularization,
  Optimization, and Beyond}.
\newblock (Adaptive Computation and Machine Learning). MIT Press, 2001.

\bibitem{Scholkopf98}
B.~Sch\"{o}lkopf, A.~J. Smola, and K-R M{\"u}ller.
\newblock Nonlinear component analysis as a kernel eigenvalue problem.
\newblock {\em Neural Computation}, 10(5):1299--1319, 1998.

\bibitem{Shawe-Taylor04}
J.~Shawe-Taylor and N.~Cristianini.
\newblock {\em Kernel Methods for Pattern Analysis}.
\newblock Cambridge University Press, New York, NY, USA, 2004.

\bibitem{Tikhonov77}
A.~N. Tikhonov and V.~Y. Arsenin.
\newblock {\em Solutions of Ill Posed Problems}.
\newblock W. H. Winston, Washington, D. C., 1977.

\bibitem{Vapnik98}
V.~Vapnik.
\newblock {\em Statistical Learning Theory}.
\newblock Wiley, New York, NY, USA, 1998.

\bibitem{Wahba90}
G.~Wahba.
\newblock {\em Spline Models for Observational Data}.
\newblock SIAM, Philadelphia, USA, 1990.

\end{thebibliography}
\end{document}